\tikzset{snake it/.style={decorate, decoration={snake, segment length=3mm, amplitude=0.5mm}}}
\newtheorem{thm}{Theorem}
\newtheorem{cor}[thm]{Corollary}
\newtheorem{lem}[thm]{Lemma}
\newtheorem{prop}[thm]{Proposition}
\theoremstyle{definition}
\newtheorem{rem}[thm]{Remark}
\newtheorem*{que1}{Question 1}
\newcommand{\PMod}{\operatorname{PMod}}
\newcommand{\Z}{\mathbb Z}
\renewcommand{\S}{S}
\newcommand{\Push}{p}
\renewcommand{\Cap}{\operatorname{Cap}}
\newcommand{\Homeo}{\operatorname{Homeo}}
\renewcommand{\strut}{\rule[-5pt]{0pt}{15pt}}
\begin{document} 

\date{\today}

\title{Linearity of some low-complexity mapping class groups}

\author{Ignat Soroko}
\address{Department of Mathematics\\
Louisiana State University\\
Baton Rouge\\ LA 70803\\ USA}
\email{ignatsoroko@lsu.edu}

\subjclass[2010]{Primary  57N05, 57M07.}

\begin{abstract}
By analyzing known presentations of the pure mapping groups of orientable surfaces of genus $g$ with $b$ boundary components and $n$ punctures, we show that these groups are isomorphic to some groups related to the braid groups and the Artin group of type $D_4$ in the cases when $g=0$ with $b$ and $n$ arbitrary, and when $g=1$ and $b+n$ is at most $3$. As a corollary, we conclude that the pure mapping class groups are linear in these cases.
\end{abstract}

\maketitle	

\section{Introduction}
Let $\S_{g,b,n}$ denote the orientable surface of genus $g$ with $b$ boundary components and $n$ punctures. Let $\PMod_{g,b,n}$ denote the pure mapping class group of orientation-preserving diffeomorphisms of $\S_{g,b,n}$ identical on the boundary and not permuting punctures, up to the isotopies identical on the boundary and not permuting punctures.

Presentations of $\PMod_{g,b,n}$ are known. Gervais~\cite{Ger} discovered a symmetrical presentation for $\PMod_{g,n,0}$ for $g>0$ in terms of commutator, braid, and star relations only. Labru\`ere and Paris have obtained in~\cite{LP} the presentations for $\PMod_{g,b,n}$ with arbitrary $g>0$, $b$ and $n$ in terms of quotients of Artin groups. However, both Gervais and Labru\`ere--Paris did not explicitly lay out to which known groups their presentation simplifies to when genus is equal to $1$ and the number of punctures and boundary components is small. As a result, a series of remarkable isomorphisms between such mapping class groups and some groups closely related to the braid group $B_4$ and to the Artin group of type $D_4$ have remained unknown to the general mathematical community. The purpose of this note is to make these beautiful connections to be broadly known. 

We also provide a simple proof for the presentation of $\PMod_{0,m,n}$, $(m>1)$, the pure mapping class group of the sphere with an arbitrary number of boundary components and punctures, which also seems to be missing in the literature.

Let $B_n$ denote the braid group on $n$ strands, $PB_n$ the pure braid group on $n$ strands, $A(D_4)$ the Artin group of type $D_4$, and $Z(G)$ the center of a group $G$. We establish the following theorem.

\begin{thm}\label{th5}
The following table lists the isomorphism types of groups $\PMod_{g,b,n}$ for the given triples $(g,b,n)$:
\begin{center}
{\renewcommand{\arraystretch}{0}%
\begin{tabular}{|c|c|}
\hline\strut
$(g,b,n)$ & $\PMod_{g,b,n}$\\
\hline \rule{0pt}{1pt}&\\
\hline\strut $(0,m,n)$, $m>1$ & $PB_{n+m-1}\times \Z^{m-1}$\strut\\
\hline\strut $(1,2,0)$ & $B_4 \times \Z$\\
\hline\strut $(1,1,1)$ & $B_4$ \\
\hline\strut $(1,0,2)$ & $B_4/Z(B_4)$\\
\hline\strut $(1,3,0)$ & $A(D_4)\times\Z^2$\strut\\
\hline\strut $(1,2,1)$ & $A(D_4)\times\Z$\\
\hline\strut $(1,1,2)$ & $A(D_4)$\\
\hline\strut $(1,0,3)$ & $A(D_4)/Z(A(D_4))$\\
\hline
\end{tabular}}\end{center}
\end{thm}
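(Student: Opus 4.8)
The plan is to read every entry of the table off an explicit finite presentation, so the argument divides along the two genera.

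For the genus-$0$ family I would first establish the presentation of $\PMod_{0,m,n}$ promised in the introduction, taking as the base case the classical identification $\PMod_{0,1,n}\cong PB_n$ of the pure mapping class group of an $n$-punctured disk. Realizing $\S_{0,m,n}$ as a disk with outer boundary $\partial_m$ carrying $m-1$ interior holes $\partial_1,\dots,\partial_{m-1}$ and $n$ punctures, I would cap each interior hole with a once-punctured disk; each such capping turns a hole into a puncture, kills the boundary twist $T_i$, and contributes a central kernel $\langle T_i\rangle\cong\Z$. Iterating produces a central extension
$$1\to\Z^{m-1}\to\PMod_{0,m,n}\to PB_{n+m-1}\to 1,$$
in which the outer twist $T_m$ is absorbed as the full twist generating $Z(PB_{n+m-1})$. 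Because the kernel is central, any section yields a direct product, and such a section is visible in the presentation since no relation forces a nontrivial power of $T_i$ into the braid subgroup; this gives $\PMod_{0,m,n}\cong PB_{n+m-1}\times\Z^{m-1}$.

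For genus $1$ I would reduce the seven remaining cases to two base computations, $\PMod_{1,1,1}\cong B_4$ and $\PMod_{1,1,2}\cong A(D_4)$. Starting from the Labru\`ere--Paris presentation (or the symmetric Gervais presentation in the boundary-only cases), whose generators are Dehn twists along explicit simple closed curves, I would apply Tietze transformations to reach a standard Artin presentation. The decisive choice is a curve system realizing the appropriate Coxeter graph: three curves $c_1,c_2,c_3$ with $i(c_1,c_2)=i(c_2,c_3)=1$, $i(c_1,c_3)=0$ for $A(A_3)=B_4$, and four curves in the star configuration of $D_4$ (a central curve meeting three mutually disjoint curves once each) for $A(D_4)$. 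One must check that these twists generate, that the given relations are Tietze-equivalent to the Artin relations, and that the boundary twist $T_1$ is carried to a generator of the infinite cyclic center $Z(B_4)$, respectively $Z(A(D_4))$.

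The other six genus-$1$ entries then follow from the capping exact sequence, with $G$ denoting the base group determined by $b+n$. Converting a puncture into a boundary component adjoins a new central boundary twist and yields a split central extension $1\to\Z\to\PMod_{1,b+1,n-1}\to\PMod_{1,b,n}\to 1$, so inductively $\PMod_{1,b,n}\cong G\times\Z^{b-1}$ for $b\ge 1$; the extension splits because the new twist is an unconstrained central generator in the presentation. Capping the final boundary of $\PMod_{1,1,n}$ with a once-punctured disk instead kills $T_1$, which is the generator of $Z(G)$, giving the central quotient $\PMod_{1,0,n+1}\cong G/Z(G)$. The hard part will be the Tietze reduction itself: matching the comparatively large Labru\`ere--Paris relator sets to the two- and four-generator Artin presentations, and, in parallel, tracking the boundary twist through the isomorphism. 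Pinning down the center is exactly what separates the three forms $G\times\Z^{b-1}$, $G$, and $G/Z(G)$, so the genuine work lies in certifying which curve-twist equals which Artin generator and in verifying that $Z(B_4)$ and $Z(A(D_4))$ are infinite cyclic and are hit precisely by $T_1$.
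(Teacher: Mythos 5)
Your overall architecture --- capping exact sequences plus presentation analysis --- matches the paper's, but there is a genuine gap at the step you lean on most heavily: the claim that each capping extension splits. A central extension $1\to\Z\to E\to Q\to1$ need not split, and within this very family the capping sequence $1\to\Z\to\PMod_{1,1,0}\to\PMod_{1,0,1}\to1$, i.e.\ $1\to\Z\to B_3\to\mathrm{SL}_2(\Z)\to1$, does \emph{not} split, since $B_3$ is torsion-free while $\mathrm{SL}_2(\Z)$ has torsion. So splitting is not a formal consequence of the kernel being a central boundary twist, and your justifications --- ``no relation forces a nontrivial power of $T_i$ into the braid subgroup'' and ``the new twist is an unconstrained central generator in the presentation'' --- are circular: they presuppose exactly the presentation of the larger group that you are trying to derive. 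In genus $0$ the paper closes this gap by exhibiting an explicit retraction $\PMod_{0,m+1,n}\to\langle T_{\delta_{m+1}}\rangle\cong\Z$, built as a composite of capping and forgetful epimorphisms down to $\PMod_{0,2,0}\cong\Z$, which together with centrality of the kernel yields the direct product decomposition; some such construction (or an equivalent cohomological argument) is unavoidable in your write-up as well.

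In genus $1$ your plan also runs the induction in the wrong direction. You propose to start from $\PMod_{1,1,1}$ and $\PMod_{1,1,2}$ and build \emph{up} to $b=2,3$, which requires the unproved splitting at every step. The paper instead starts at the top, where Gervais's presentation (valid for $\PMod_{g,m,0}$, i.e.\ boundary and no punctures) is available: it reduces that presentation of $\PMod_{1,3,0}$ to $A(D_4)\times\Z^3$ modulo the single star relation $c_{12}c_{23}c_{31}=(a_1a_2a_3b)^3$, by using the degenerate star relations to eliminate $c_{21},c_{13},c_{32}$ and the centrality of $\Delta_{12}^2$ and $\Delta_{123}$ to verify the surviving commutation relations; likewise $\PMod_{1,2,0}\cong B_4\times\Z^2\big/\big\langle c_{12}c_{21}=(a_1ba_2)^4\big\rangle$. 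From there one only ever caps \emph{downward}, which merely imposes $c_{ij}=1$ on a known presentation and needs no splitting; the quotients $A(D_4)/Z(A(D_4))$ and $B_4/Z(B_4)$ appear because the last surviving star relation kills exactly the central element $(a_1a_2a_3b)^3$, respectively $(a_1ba_2)^4$. I would restructure your argument the same way: do the presentation work once, at the maximal-boundary surface, and derive all remaining cases by adding relations rather than by splitting extensions.
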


The genus $0$ case of Theorem~\ref{th5} is proved in Section~\ref{sect:0} (Proposition~\ref{prop:3}) and all other cases in Section~\ref{sect:1} (Propositions~\ref{prop8}, \ref{prop10}, Corollaries~\ref{cor9}, \ref{cor11}).

As a useful application we mention a well-known open problem to find out for which values of $g$, $b$, $n$ the group $\PMod_{g,b,n}$ admits a faithful linear representation. In her collection of open problems~\cite{Bir}, Joan Birman mentions that for the following triples $(g,b,n)$: 
\[
(1,0,0), (1,1,0), (1,0,1), (0,1,n), (0,0,n), (2,0,0)
\]
the mapping class group $\PMod_{g,b,n}$ is known to be linear, and she asks if this list can be extended to contain any other triples. We extend this list by the triples from Theorem~\ref{th5}:

\begin{cor}
The pure mapping class group $\PMod_{g,b,n}$ is linear for any of the triples $(g,b,n)$ from Theorem~\ref{th5}:
\[
(0,m,n),m>1,\quad (1,2,0), (1,1,1),(1,0,2),(1,3,0),(1,2,1),(1,1,2),(1,0,3).
\]
\end{cor}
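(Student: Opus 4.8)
The plan is to read off linearity directly from the isomorphism types computed in Theorem~\ref{th5}, using the two standard permanence properties of the class of linear groups: a subgroup of a linear group is linear, and a finite direct product of linear groups is linear (by passing to block-diagonal representations over a common field, which we may take to be $\mathbb C$). The building blocks that appear in the table are the pure braid groups $PB_{n+m-1}$, the braid group $B_4$, the Artin group $A(D_4)$, the free abelian groups $\Z^k$, and the two central quotients $B_4/Z(B_4)$ and $A(D_4)/Z(A(D_4))$.

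First I would record linearity of the basic pieces. The braid groups $B_k$ are linear via the Lawrence--Krammer--Bigelow representation (Bigelow, Krammer), and hence so is every pure braid group $PB_k\le B_k$. Artin groups of finite (spherical) type are linear by the work of Cohen--Wales and Digne, so in particular $A(D_4)$ is linear, and $\Z^k$ is trivially linear (e.g.\ via multiplicatively independent scalars in $\mathbb C^{\times}$). Forming direct products then settles every row of the table except the two central quotients: $PB_{n+m-1}\times\Z^{m-1}$, $B_4\times\Z$, $B_4$, $A(D_4)\times\Z^2$, $A(D_4)\times\Z$, and $A(D_4)$ are all linear.

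The genuinely nontrivial cases are $B_4/Z(B_4)$ and $A(D_4)/Z(A(D_4))$, since linearity is \emph{not} inherited by quotients in general. Here I would run a projective representation argument. Let $G$ be either $B_4$ or $A(D_4)$, and let $\rho\colon G\to\operatorname{GL}_N(\mathbb C)$ be a faithful representation as above, chosen so that $\rho(Z(G))$ consists of scalar matrices; then $\rho$ descends to $\bar\rho\colon G/Z(G)\to\operatorname{PGL}_N(\mathbb C)$. This $\bar\rho$ is faithful, because if $\rho(g)$ is scalar then it is central in $\operatorname{GL}_N(\mathbb C)$, so $g$ commutes with all of $G$ by faithfulness of $\rho$, whence $g\in Z(G)$. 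Finally $\operatorname{PGL}_N(\mathbb C)$ is itself linear: its conjugation action on the matrix algebra $M_N(\mathbb C)\cong\mathbb C^{N^2}$ yields a faithful embedding $\operatorname{PGL}_N(\mathbb C)\hookrightarrow\operatorname{GL}_{N^2}(\mathbb C)$. Composing, $G/Z(G)\hookrightarrow\operatorname{PGL}_N(\mathbb C)\hookrightarrow\operatorname{GL}_{N^2}(\mathbb C)$ is a faithful linear representation.

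The main obstacle is precisely the scalarity hypothesis in the previous step: one must guarantee that the chosen faithful representation of $B_4$ and of $A(D_4)$ carries the center into the scalar matrices, so that the quotient map descends. I expect to clear this by invoking irreducibility of the Lawrence--Krammer--Bigelow and Cohen--Wales representations for generic parameter values, whence Schur's lemma forces each central element to act as a scalar. Should irreducibility be delicate to pin down for $A(D_4)$, an equivalent route is to recall that $Z(G)$ is generated by a power of the Garside element $\Delta$ and to verify by a direct computation that $\Delta$ acts by a scalar in the representation, after which the argument above applies verbatim.
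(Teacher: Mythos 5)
Your proposal follows the paper's strategy almost exactly for most of the table: linearity of $B_k$ (hence of $PB_k$) via Krammer--Bigelow, linearity of $A(D_4)$ via Cohen--Wales and Digne, linearity of $\Z^k$, and closure of linearity under finite direct products are precisely the ingredients the paper invokes. The one place you diverge is the treatment of the central quotients $B_4/Z(B_4)$ and $A(D_4)/Z(A(D_4))$: the paper simply cites the general theorem that $G$ linear implies $G/Z(G)$ linear (Theorem~6.4 of~\cite{Wehr}), whereas you reprove a special case via projective representations, at the cost of the extra hypothesis that the chosen faithful representation sends $Z(G)$ into the scalar matrices --- a hypothesis you correctly identify as the main remaining obstacle but only sketch how to verify (generic irreducibility of the Lawrence--Krammer--Bigelow and Cohen--Wales representations plus Schur's lemma, with the added subtlety that irreducibility and faithfulness must be arranged at the \emph{same} parameter values). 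That verification is believable but is genuine extra work, especially for $A(D_4)$. Note, however, that you can eliminate the scalarity hypothesis entirely by a small modification of your own argument: let $G$ act by conjugation not on all of $M_N(\mathbb C)$ but on the linear span $E$ of $\rho(G)$ inside $M_N(\mathbb C)$. An element $g$ acts trivially on $E$ if and only if $\rho(g)$ commutes with $\rho(h)$ for every $h\in G$, which by faithfulness of $\rho$ happens if and only if $g\in Z(G)$; hence $G/Z(G)$ embeds in $\operatorname{GL}(E)\le\operatorname{GL}_{N^2}(\mathbb C)$ with no irreducibility input at all. This is exactly the proof of the theorem the paper cites, so with that adjustment your argument closes completely and becomes, if anything, more self-contained than the paper's one-line citation.
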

\begin{proof}
Braid groups (and hence pure braid groups) are linear by the results of Krammer~\cite{Kra} and Bigelow~\cite{Big}. That Artin groups of spherical type are linear (in particular, $A(D_4)$), was proved by Cohen and Wales~\cite{CW} and independently by Digne~\cite{Dig}. The fact that if a group $G$ is linear then the quotient by its center $G/Z(G)$ is linear, follows from Theorem~6.4 in~\cite{Wehr}. And, of course, if two groups are linear, their direct product is linear as well.
\end{proof}

As another application, in Section~\ref{sec:HT} we answer in the negative a question of Hamidi-Tehrani (\cite{HT}) whether a group generated by certain positive multi-twists in $S_{1,2,0}$ is free of rank $2$.

\subsection*{Acknowledgments}
The author is grateful to 
Nikolai Ivanov and Dan Margalit for their useful comments on an early version of this paper.

\section{Genus \texorpdfstring{$0$}{0} case: \texorpdfstring{$(g,b,n)=(0,m,n)$, $m>1$}{(g,b,n)=(0,m,n)}}\label{sect:0}
In this section we obtain the following description of the mapping class group $\PMod_{0,m,n}$:
\begin{prop}\label{prop:3}
For $m>1$, $n\geq 0$,
\[
\PMod_{0,m,n}\cong \Z^{m-1}\times PB_{m+n-1}.
\]
If, in addition, $m+n\ge 3$, then 
\[
\PMod_{0,m,n}\cong \Z^m\times PB_{m+n-1}/Z(PB_{m+n-1}).
\]
\end{prop}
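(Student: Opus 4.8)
The plan is to turn the $m$ boundary components into punctures by capping, obtaining a central extension of a pure braid group, and then to split this extension explicitly using ``forget-and-cap'' homomorphisms.

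First I would single out one boundary component $\delta$ of $\S_{0,m,n}$ and cap each of the remaining $m-1$ boundaries $\beta_1,\dots,\beta_{m-1}$ with a once-marked disk. Iterating the standard capping exact sequence (each capping contributes a cyclic kernel generated by the corresponding boundary twist) produces a central extension
\[
1\longrightarrow K \longrightarrow \PMod_{0,m,n}\xrightarrow{\ q\ } PB_{m+n-1}\longrightarrow 1,
\]
whose quotient is the pure mapping class group of the disk $\S_{0,1,m+n-1}$ carrying $m+n-1$ marked points, namely $PB_{m+n-1}$, and where $K$ is generated by the boundary Dehn twists $T_{\beta_1},\dots,T_{\beta_{m-1}}$. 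Each such twist is central in $\PMod_{0,m,n}$ because $\beta_i$ is preserved up to isotopy by every class fixing the boundary; hence $K$ is abelian and every element of $K$ is a product $\prod_i T_{\beta_i}^{a_i}$.

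Next I would build a retraction onto $K$. For each $i\in\{1,\dots,m-1\}$ I define a homomorphism $\rho_i\colon\PMod_{0,m,n}\to\Z$ as the composite of the forgetful map filling in all $n$ punctures with the capping map filling every inner boundary $\beta_j$, $j\neq i$, by an \emph{unmarked} disk; the target is $\operatorname{Mod}(\S_{0,2,0})\cong\Z$, the mapping class group of the annulus bounded by $\beta_i$ and $\delta$. By construction $\rho_i(T_{\beta_i})$ is a generator while $\rho_i(T_{\beta_j})=0$ for $j\neq i$, so $r:=(\rho_1,\dots,\rho_{m-1})$ restricts to an isomorphism $K\xrightarrow{\ \sim\ }\Z^{m-1}$; in particular the $T_{\beta_i}$ are independent and $K\cong\Z^{m-1}$. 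The combined map $(q,r)\colon\PMod_{0,m,n}\to PB_{m+n-1}\times\Z^{m-1}$ is then an isomorphism: it is injective since an element $\prod_i T_{\beta_i}^{a_i}$ of $K=\ker q$ lies in $\ker r$ only when all $a_i=0$, and it is surjective because $q$ is onto and $r$ already surjects onto $\Z^{m-1}$ from the central subgroup $K$. This gives the first isomorphism.

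Finally, the second isomorphism is a formal consequence. For $k=m+n-1\ge 2$ (equivalently $m+n\ge 3$) the center $Z(PB_k)\cong\Z$ is generated by the full twist, whose image in $H_1(PB_k)\cong\Z^{\binom{k}{2}}$ is nonzero, so projecting to one coordinate gives a homomorphism $PB_k\to\Z$ that is an isomorphism on $Z(PB_k)$; hence $PB_k\cong\Z\times PB_k/Z(PB_k)$. Substituting this into the first isomorphism yields $\PMod_{0,m,n}\cong\Z^{m-1}\times\Z\times PB_{m+n-1}/Z(PB_{m+n-1})=\Z^{m}\times PB_{m+n-1}/Z(PB_{m+n-1})$. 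I expect the main obstacle to be the splitting rather than the capping: a central extension by $\Z^{m-1}$ need not be a direct product, so the geometric content lies precisely in constructing the retracting maps $\rho_i$ and checking they are well defined and compute correctly on the boundary twists; the iterated capping sequence, the centrality of boundary twists, and the algebra of the second part are then routine.
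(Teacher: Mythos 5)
Your proof is correct and follows essentially the same route as the paper: both arguments reduce to $PB_{m+n-1}$ via the capping exact sequence and split the resulting central extension of boundary twists by retracting onto annulus mapping class groups $\PMod_{0,2,0}\cong\Z$ built from forgetful and capping maps. The only differences are organizational --- you cap all $m-1$ boundaries and assemble the retractions $\rho_i$ at once where the paper inducts one boundary at a time, and you prove the splitting $PB_k\cong\Z\times PB_k/Z(PB_k)$ by an abelianization argument where the paper cites it --- neither of which changes the substance.
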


This result is mentioned in the literature (see e.g.~\cite[Lemma~3.4]{Har}), but we were unable to find a detailed proof of it anywhere, so we provide it here.

For completeness, we recall a few known results on $\PMod_{0,m,n}$, see~\cite[9.3, 4.2.4, Prop.\,2.4]{FM}:
\begin{gather}
\PMod_{0,0,n}\cong PB_{n-1}/Z(PB_{n-1}),\qquad \PMod_{0,1,n}\cong PB_n\cong PB_{n}/Z(PB_{n})\times\Z\quad(\text{if }n\ge2),\label{eq:1}\\
\PMod_{0,0,0}=\PMod_{0,0,1}=\PMod_{0,0,2}=\PMod_{0,0,3}=\PMod_{0,1,0}=\PMod_{0,1,1}=1,\label{eq:2}\\
\PMod_{0,0,4}=F_2,\qquad \PMod_{0,1,2}=\Z,\qquad \PMod_{0,1,3}=F_2\times \Z,\qquad \PMod_{0,2,0}=\Z.\label{eq:3}
\end{gather}

We will make use of the following lemma:
\begin{lem}[Capping the boundary,~{\cite[Prop.\,3.19]{FM}}]\label{lem3}
Let $S'$ be the surface obtained from a surface $S$ by capping one boundary component $\beta$ with a once-punctured disk. Let\/ $\Cap\colon\PMod(S)\to\PMod(S')$ be the induced homomorphism obtained by extending homeomorphisms of $S$ to the once-punctured disk by the identity. Then the following sequence is exact:
\[
1\longrightarrow\Z\longrightarrow\PMod(S)\xrightarrow{\,\,\,\Cap\,\,\,}\PMod(S')\longrightarrow1
\]
where $\Z$ is generated by the twist $T_\beta$ around $\beta$.\qed
\end{lem}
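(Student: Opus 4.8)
The plan is to establish the two defining properties of a short exact sequence separately: surjectivity of $\Cap$, and the identification of its kernel with the infinite cyclic group generated by $T_\beta$. Throughout I would fix a collar neighborhood $N\cong\beta\times[0,1]$ of $\beta$ in $S$; after capping, $N$ together with the once-punctured disk glued along $\beta$ forms a once-punctured disk $P\subset S'$ whose interior contains the new puncture, and this local model is where essentially all of the geometry takes place. Implicitly one works under the mild hypotheses ensuring that isotopy extension and the Alexander trick apply, so that isotopies can be promoted to ambient isotopies.

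For surjectivity I would start with an arbitrary class $f\in\PMod(S')$ and choose a representing homeomorphism $\Phi$ fixing the puncture $p$. Because any homeomorphism of $S'$ fixing a marked point can be isotoped to be the identity on a disk neighborhood of that point (the Alexander trick applied to $\Homeo$ of a disk with a fixed center), I can arrange that $\Phi$ restricts to the identity on the capping punctured disk $P$. The restriction $\Phi|_S$ is then a homeomorphism of $S$ that is the identity on $\beta$, indeed on all of $\partial S$, and its class in $\PMod(S)$ maps to $f$ under $\Cap$. This yields surjectivity and exactness at the right.

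For the kernel, two points are routine. First, $\Cap(T_\beta)=1$: in $S'$ the curve $\beta$ bounds the once-punctured disk $P$, and a Dehn twist about a curve bounding a once-punctured disk is isotopic to the identity, so $T_\beta$ dies under $\Cap$. Second, $T_\beta$ has infinite order in $\PMod(S)$, a standard fact about Dehn twists about essential simple closed curves, so that the subgroup generated by $T_\beta$ is isomorphic to $\Z$ and the sequence is exact at the left. These give the inclusion $\langle T_\beta\rangle\subseteq\ker(\Cap)$ together with the correct isomorphism type.

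The hard part will be the reverse inclusion $\ker(\Cap)\subseteq\langle T_\beta\rangle$. Given $f\in\ker(\Cap)$ represented by $\phi\colon S\to S$ identical on $\partial S$, the extension $\Cap(\phi)$ is isotopic to $\mathrm{id}_{S'}$ through homeomorphisms fixing $p$; call this ambient isotopy $H_t$. The idea is to measure how $H_t$ rotates a framing at the puncture: after arranging that $H_t$ preserves $P$, I would cut $S'$ back open along $\beta$ and convert $H_t$ into an isotopy of $\phi$ supported in the collar $N$. An isotopy of an annulus that is the identity on one boundary circle and arbitrary on the other is classified, up to the relevant equivalence, by a single twisting integer $k$ (a winding number), and this forces $\phi=T_\beta^{k}$ in $\PMod(S)$. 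This is exactly the step where capping with a \emph{punctured} disk, which records the twisting through the puncture, differs from capping with a plain disk; making the winding-number bookkeeping rigorous is the technical heart of the argument, and one can equivalently filter $\Cap$ through the puncture-forgetting map to the filled surface and invoke the point-pushing (Birman) exact sequence to account precisely for the twisting that the puncture at $p$ remembers.
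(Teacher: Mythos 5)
The paper offers no proof of this lemma at all: it is quoted from Farb--Margalit \cite[Prop.~3.19]{FM} with a \qedsymbol, so the only meaningful comparison is with the standard argument there, which your sketch follows in all essentials --- surjectivity via the Alexander trick on a neighborhood of the puncture, $\Cap(T_\beta)=1$ because $\beta$ bounds the once-punctured disk $P$ in $S'$, infinite order of $T_\beta$ for exactness on the left, and the kernel computation by tracking the capping disk through the isotopy. Two points deserve care. First, the phrase ``after arranging that $H_t$ preserves $P$'' is not literally achievable: the obstruction to such an arrangement is precisely the twisting you are trying to measure, so this step is circular as stated. The correct formulation, which your winding-number language is implicitly reaching for and which is how Farb--Margalit argue, is to regard $t\mapsto H_t|_P$ as a loop in the space of embeddings of a once-marked disk in $(S',p)$; that space has fundamental group $\Z$, generated by a rotation loop whose image in $\PMod(S)$ under the boundary map of the associated fibration is $T_\beta$, and this is what forces $\phi=T_\beta^k$. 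Second, your appeal to ``essential simple closed curves'' is slightly off ($\beta$ is boundary-parallel), and the lemma silently assumes $S$ is not a disk --- for $S=D^2$ one gets $\PMod(S)=\PMod(S')=1$ and the sequence fails; likewise your alternative route through the Birman exact sequence needs the hypotheses under which that sequence is exact. Neither caveat affects the surfaces to which the paper applies the lemma, so your outline is sound once the ``preserves $P$'' step is replaced by the loop-of-embeddings argument.
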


\begin{proof}[Proof of Proposition~\ref{prop:3}]
The second statement follows from the first one and~\eqref{eq:1} above. To prove the first statement, we argue by induction. Again, by~\eqref{eq:1}, $\PMod_{0,1,n}\cong PB_n\cong \Z^{1-1}\times PB_{1+n-1}$, which gives us the case of $m=1$ (and arbitrary $n\ge1$). Also, we know that $\PMod_{0,2,0}=\Z=\Z^{2-1}\times PB_1$. Suppose that the statement is true for $\PMod_{0,m,n+1}$ for some $m\ge1, n\ge0$, and we want to prove it for $\PMod_{0,m+1,n}$. Let $\delta_1$, \dots, $\delta_{m+1}$ denote the boundary circles of $\S_{0,m+1,n}$. By Lemma~\ref{lem3}, we have a short exact sequence 
\[
1\longrightarrow\Z\longrightarrow\PMod_{0,m+1,n}\xrightarrow{\,\,\,\Cap\,\,\,}\PMod_{0,m,n+1}\longrightarrow1,
\]
where the kernel $\Z$ is generated by the twist $T_{\delta_{m+1}}$ around $\delta_{m+1}$. It is sufficient to prove that this sequence splits as a direct product. For that we exhibit an epimorphism $f\colon\PMod_{0,m+1,n}\longrightarrow\Z$ such that the composition $\Z\longrightarrow\PMod_{0,m+1,n}\longrightarrow\Z$ is identical on $\Z$. Recall that the forgetful homomorphism ${\PMod(S\setminus\{x\})}\longrightarrow\PMod(S)$, which treats a mapping class of a surface $S$ without a point $x$ as a mapping class of $S$ is surjective (\cite[4.2.1]{FM}). We construct $f\colon\PMod_{0,m+1,n}\longrightarrow\PMod_{0,2,0}\cong \Z$ as a composition of $m-1$ capping epimorphisms, followed by $m-1+n$ forgetful epimorphisms, as shown in Figure~\ref{fig:1}.
\begin{figure}[ht!]
\begin{center}
\begin{tikzpicture}[thick]
\draw (0,0) ellipse (0.2cm and 1cm);
\draw (0,1)--(4,1);
\draw (0,-1)--(4,-1);
\draw[dashed] (4,1) arc (90:270:0.2cm and 1cm);
\draw (4,1) arc (90:-90:0.2cm and 1cm);

\draw (1,0.65) ellipse (0.3cm and 0.1cm);
\draw (2.5,0.65) ellipse (0.3cm and 0.1cm);
\draw (1.8,0.65) node {$\cdots$};
\draw (0,1.3) node {$\delta_1$};
\draw (1,1.3) node {$\delta_2$};
\draw (2.5,1.3) node {$\delta_m$};
\draw (4.25,1.3) node {$\delta_{m+1}$};

\fill (0.8,0) circle (0.05);
\fill (1.2,0) circle (0.05);
\fill (1.6,0) circle (0.05);
\fill (2.8,0) circle (0.05);
\draw (2.25,0) node {$\cdots$};
\draw [thin,snake=brace,segment amplitude=2mm] (2.9,-0.15) -- (0.7,-0.15); \draw (1.8,-0.55) node {\small$n$}; 

\begin{scope}[ultra thin]
\draw[dashed] (3.5,1) arc (90:270:0.2cm and 1cm);
\draw (3.5,1) arc (90:-90:0.2cm and 1cm);
\end{scope}

\draw[thin, ->] (4.75,0)--(6.25,0);
\draw[thin, ->] (7,-1.25) to [bend left] (3.85,-1.25);

\begin{scope}[xshift=7cm]
\draw (0,0) ellipse (0.2cm and 1cm);
\draw (0,1)--(3.5,1);
\draw (0,-1)--(3.5,-1);
\draw[dashed] (3.5,1) arc (90:270:0.2cm and 1cm);
\draw (3.5,1) arc (90:-90:0.2cm and 1cm);
\draw (0,1.3) node {$\delta_1$};
\draw (3.75,1.3) node {$\delta_{m+1}$};
\end{scope}
\end{tikzpicture}
\caption{\label{fig:1}The inclusions between surfaces $S_{0,m+1,n}$ and $S_{0,2,0}$ which induce the epimorphism $f\colon\PMod_{0,m+1,n}\longrightarrow\PMod_{0,2,0}$ and the identification $\PMod_{0,2,0}\cong\langle T_{\delta_{m+1}}\rangle$.}
\end{center}
\end{figure}
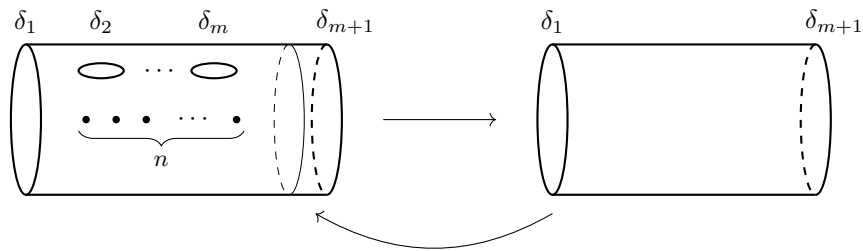
Obviously, the boundary twist $T_{\delta_{m+1}}$ maps under $f$ to the generator of $\PMod_{0,2,0}$ which can be identified with the boundary twist $T_{\delta_{m+1}}$ of $S_{0,m+1,n}$ itself. This shows that $\PMod_{0,m+1,n}\cong\Z\times\PMod_{0,m,n+1}$, and hence that $\PMod_{0,m+1,n}\cong\Z^m\times PB_{m+n}$ by induction.
\end{proof}

\section{Genus \texorpdfstring{$1$}{1} case}\label{sect:1}
In this section we analyze the Gervais presentation for $\PMod_{1,3,0}$ and show that it defines the group $A(D_4)\times \Z^2$. Then we apply Lemma~\ref{lem3} to obtain the descriptions of $\PMod_{1,2,1}$, $\PMod_{1,1,2}$, and $\PMod_{1,0,3}$ from Theorem~\ref{th5}. In the end, we do the same for $\PMod_{1,2,0}$, $\PMod_{1,1,1}$, and $\PMod_{1,0,2}$. 

\subsection*{The Gervais presentation for \texorpdfstring{$\PMod_{1,3,0}$}{PMod(1,3,0)}} In~\cite{Ger}, Gervais obtained a remarkable finite presentation for $\PMod(g,m,0)$ which is very symmetrical, though it is admittedly not the most economical in terms of the total number of generators and relations involved. For $\PMod_{1,3,0}$ it specifies to the following presentation, generated by the twists around the curves depicted in Figure~\ref{fig:2}. For simplicity, we denote a curve and its twist by the same letter.

\begin{figure}[ht!]
\begin{center}
\begin{tikzpicture}
\draw [very thick] (0,0) circle (0.75cm);
\draw [very thick] (2.934, 0.6237) arc (12:108:3cm);
\draw [very thick] (-2.007, 2.229) arc (132:228:3cm);
\draw [very thick] (-0.927, -2.853) arc (252:348:3cm);

\color{blue}
\draw [very thick] (2.934, 0.6237) arc (141.41:218.59:1cm);
\draw [very thick] (2.934, 0.6237) arc (110.674:249.326:0.67cm);  
\draw [very thick] (-2.007, 2.229) arc (261.41:338.59:1cm); 
\draw [very thick] (-2.007, 2.229) arc (230.674:369.326:0.67cm); 
\draw [very thick] (-0.927, -2.853) arc (21.41:98.59:1cm);
\draw [very thick] (-0.927, -2.853) arc (-9.326:129.326:0.67cm);
\draw (3.3,0) node {\Large$c_{31}$};
\draw (-1.5,2.75) node {\Large$c_{23}$};
\draw (-1.5,-2.75) node {\Large$c_{12}$};

\color{red}
\draw[thick] (-3,0) arc (180:0:1.125cm and 0.15cm);
\draw[dashed,thick] (-3,0) arc (180:360:1.125cm and 0.15cm);
\begin{scope}[rotate=-120]
\draw[thick] (-3,0) arc (180:0:1.125cm and 0.15cm);
\draw[thick,dashed] (-3,0) arc (180:360:1.125cm and 0.15cm);
\end{scope}
\begin{scope}[rotate=120]
\draw[thick,dashed] (-3,0) arc (180:0:1.125cm and 0.15cm);
\draw[thick] (-3,0) arc (180:360:1.125cm and 0.15cm);
\end{scope}
\draw (1.45,-1.6) node {\Large$a_1$};
\draw (1.45,1.6) node {\Large$a_3$};
\draw (-2,0.4) node {\Large$a_2$};

\draw [thick] (0,0) circle (1cm);
\draw (1.25,0) node {\Large$b$};

\color{blue}
\draw[thick,dashed] (0.927,2.853) arc (101.342:258.66:2.91cm);
\draw[thick] (0.927,2.853) arc (97.827:262.173:2.88cm);
\begin{scope}[rotate=120]
\draw[thick,dashed] (0.927,2.853) arc (101.342:258.66:2.91cm);
\draw[thick] (0.927,2.853) arc (97.827:262.173:2.88cm);
\end{scope}
\begin{scope}[rotate=-120]
\draw[thick,dashed] (0.927,2.853) arc (101.342:258.66:2.91cm);
\draw[thick] (0.927,2.853) arc (97.827:262.173:2.88cm);
\end{scope}
\draw (1.1,3.1) node {\Large$c_{13}$};
\draw (-3.3,-0.6) node {\Large$c_{32}$};
\draw (2.2,-2.5) node {\Large$c_{21}$};

\begin{scope}[xshift=8cm]
\color{black}
\fill (0,0) circle (2pt);
\fill (-1.5,0) circle (2pt);
\draw[very thick] (-1.5,0)--(0,0);
\begin{scope}[rotate=120]
\fill (0,0) circle (2pt);
\fill (-1.5,0) circle (2pt);
\draw[very thick] (-1.5,0)--(0,0);
\end{scope}
\begin{scope}[rotate=-120]
\fill (0,0) circle (2pt);
\fill (-1.5,0) circle (2pt);
\draw[very thick] (-1.5,0)--(0,0);
\end{scope}
\draw (0.4,0) node {\Large$b$};
\draw (1.2,-1.3) node {\Large$a_1$};
\draw (1.2,1.3) node {\Large$a_3$};
\draw (-1.5,-0.4) node {\Large$a_2$};

\end{scope}

\end{tikzpicture}
\caption{Gervais' generators of $\PMod_{1,3,0}$ and the Coxeter graph $D_4$.\label{fig:2}}
\end{center}
\end{figure}
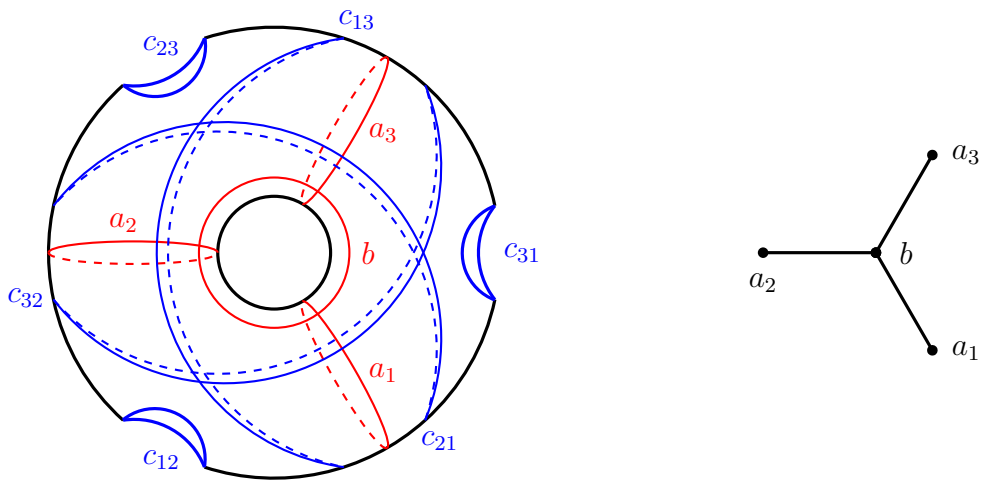

{\bf Generators:} $b$, $a_1$, $a_2$, $a_3$, $c_{12}$, $c_{21}$, $c_{13}$, $c_{31}$, $c_{23}$, $c_{32}$;

{\bf Relations:} 
\begin{itemize}
\item {\it Handles:} $\varnothing$ (present only when $g\ge2$);
\item {\it Commutators:} Any two of the above generators commute if and only if their curves are disjoint;
\item {\it Braids:} Any two of the above generators $x$, $y$ form a braid relation $xyx=yxy$ if and only if their curves intersect only once;
\item {\it Stars:} For any good triple $(i,j,k)\in\{1,2,3\}^3$, there is a star relation: 
\begin{equation}\label{eq:E}
(a_ia_ja_kb)^3=c_{ij}c_{jk}c_{ki},\tag{$E_{i,j,k}$}
\end{equation}
(with the assumption that $c_{\ell\ell}=1$);
\end{itemize}
A triple $(i,j,k)$ is called \emph{good}, if $(i,j,k)\ne(x,x,x)$ for all $x\in\{1,2,3\}$ and either $i\le j\le k$ or $j\le k\le i$ or $k\le i\le j$. 

\subsection*{Reducing the number of star relations} 
Notice that, when all $i,j,k$ are distinct, the element $\Delta_{ijk}:=(a_ia_ja_kb)^3$ in the left-hand side of the star relation~\eqref{eq:E} is the fundamental element of the Artin group of type $D_4$ on the generators $a_i, a_j, a_k, b$. In particular, $\Delta_{ijk}$ does not depend on the order of the generators inside the third power, see~\cite[{Satz~7.2 and Zusatz}]{BS}. Hence, $\Delta_{ijk}=\Delta_{jki}=\Delta_{kij}$ and the corresponding star relations $(E_{i,j,k})$, $(E_{j,k,i})$ and $(E_{k,i,j})$ coincide since $c_{ij}$, $c_{jk}$ and $c_{ki}$ pairwise commute. (Notice though, that this does not give us the equivalence of relations $(E_{i,j,k})$ corresponding to good triples with the ones corresponding to non-good triples, since the right-hand side of a non-good triple relation, say, $(E_{j,i,k})$, will be comprised of completely different curves $c_{ji}$, $c_{ik}$, $c_{kj}$.)

If in a good triple $(i,j,k)$ two elements are equal, say, $i=k$, then, 
by using only braid and commutation relations (underlined below), we obtain (cf. Lemma~2\,(iii) of~\cite{Ger}):
\begin{multline}\label{eq4}
\Delta_{iij}=(a_ia_ia_jb)^3 = a_i\,\underline{a_ia_j}\,b\cdot a_i\,\underline{a_ia_j}\,b\cdot a_ia_ia_jb= 
								a_ia_j\,\underline{a_iba_i}\,a_j\,\underline{a_iba_i}\,a_ia_jb=\\
							 a_ia_jba_i\,\underline{ba_jb}\,a_iba_ia_jb= 
							 a_ia_jb\cdot a_ia_jb\cdot \underline{a_ja_i}\,b\cdot a_ia_jb=(a_ia_jb)^4.
\end{multline}							
In particular, since $a_i$ and $a_j$ commute, we have $(a_ia_jb)^4=(a_ja_ib)^4$, and, reading the above equation backwards, we conclude that $\Delta_{iij}=\Delta_{ijj}$. We summarize our observations in the following remark.
\begin{rem}\label{rem5}
In the Gervais presentation, one only needs the good triples $(i,j,k)$ with $i\le j<k$.
\end{rem} 
(This was probably supposed to be the content of Remark~3 in~\cite{Ger}, but due to an unfortunate typo, this information was not properly conveyed to the reader.)
\begin{rem}
When all $i,j,k$ are distinct, $\Delta_{ijk}$ is the generator of the center of $A(D_4)$ (\cite{BS}). Similarly, $\Delta_{iij}$ is the generator of the center of $A(A_3)$. Indeed, as above, using the braid and commutation relations only, we get (as in the proof of Lemma~2 of~\cite{Ger}):
\begin{multline}\label{eq5}
(a_ia_jb)^4= a_ia_jb\cdot \underline{a_ia_j}\,b\cdot \underline{a_ia_j}\,b\cdot a_ia_jb=
a_i\underline{a_jba_j}\, a_iba_j\,\underline{a_iba_i}\,a_jb=\\
a_i b a_j\, \underline{ba_ib}\, a_jba_i\,\underline{ba_jb}=
a_iba_j\cdot a_i b\,\underline{a_ia_j}\,b\,\underline{a_ia_j}\,ba_j=(a_iba_j)^4,
\end{multline}
which is the expression for the generator of the center in the Artin group $A_3$ on the generators $a_i$, $a_j$, $b$, see~\cite{BS}. We will denote this element as $\Delta_{ij}^2$, since it is equal to the square of the fundamental element of the respective group.
\end{rem}

\begin{rem}
The Dehn twists involved in the Gervais presentation can be either all right or all left uniformly in all the relations. Indeed, right and left twists are inverses of each other, and braid and commutation relations are invariant under taking inverses. The same is true for the star relation, since its left-hand side doesn't depend on the order of the generators inside the third power (\cite{BS}), and its right-hand side consists of pairwise commuting twists. (One may argue that this should be true for arbitrary presentations of the mapping class group of an orientable surface in terms of Dehn twists as generators, since the definition of what is a left twist and what is a right twist around a curve depends on the way we look at the surface: `from the outside' or `from the inside'. Since homeomorphisms of the surface are defined intrinsically, the presentations of the mapping class groups should be invariant under changing all twists from the left ones to the right ones or vice versa.)
\end{rem}

\begin{prop}\label{prop8}
In the group\/ $\PMod_{1,3,0}$, the elements $a_1$, $a_2$, $a_3$, and $b$ generate a subgroup isomorphic to $A(D_4)$, and the elements $c_{12}$, $c_{23}$, $c_{31}$ generate a free abelian subgroup\/ $\Z^3$. The whole group $\PMod_{1,3,0}$ is isomorphic to their direct product modulo one star relation:
\[
\PMod_{1,3,0}\cong A(D_4)\times\Z^3\big/\big\langle c_{12}c_{23}c_{31}=(a_1a_2a_3b)^3\big\rangle.
\]
In particular, $\PMod_{1,3,0}\cong A(D_4)\times\Z^2$.
\end{prop}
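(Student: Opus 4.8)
The plan is to turn the displayed Gervais presentation into the asserted one by Tietze transformations, and then to simplify the resulting central amalgam.

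First I would record the presentation after the reductions already in hand. By Remark~\ref{rem5} only the good triples $(1,1,2)$, $(1,1,3)$, $(2,2,3)$ and $(1,2,3)$ contribute star relations, and by~\eqref{eq4} the three repeated-index relations read
\[
c_{12}c_{21}=(a_1a_2b)^4,\qquad c_{13}c_{31}=(a_1a_3b)^4,\qquad c_{23}c_{32}=(a_2a_3b)^4,
\]
while the all-distinct triple gives $c_{12}c_{23}c_{31}=(a_1a_2a_3b)^3$. I would also read off from Figure~\ref{fig:2} that the full subgraph on $\{a_1,a_2,a_3,b\}$ is exactly the Coxeter graph $D_4$, so these four twists satisfy precisely the defining relations of $A(D_4)$, and that the curves $c_{12},c_{23},c_{31}$ are pairwise disjoint, whence their twists pairwise commute.

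Next I would use the three repeated-index star relations to eliminate the generators $c_{21}$, $c_{13}$, $c_{32}$ via the substitutions $c_{21}=c_{12}^{-1}(a_1a_2b)^4$, $c_{13}=(a_1a_3b)^4 c_{31}^{-1}$, $c_{32}=c_{23}^{-1}(a_2a_3b)^4$, checking that each commutation or braid relation which mentioned an eliminated generator becomes a consequence of the surviving relations. After this the generating set is $a_1,a_2,a_3,b,c_{12},c_{23},c_{31}$ and the only star relation remaining is $c_{12}c_{23}c_{31}=(a_1a_2a_3b)^3$. The crux is then to show that each of $c_{12},c_{23},c_{31}$ commutes with each of $a_1,a_2,a_3,b$, so that $\langle c_{12},c_{23},c_{31}\rangle$ is a central $\Z^3$ and the group is a quotient of $A(D_4)\times\Z^3$. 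Some of these commutations should be immediate from disjointness in Figure~\ref{fig:2}; the rest I expect to deduce from the braid relations together with the (now eliminated) identities $c_{ij}c_{ji}=\Delta_{ij}^2=(a_ia_jb)^4$, using that $\Delta_{ij}^2$ is the central element of the $A(A_3)$-subgroup $\langle a_i,a_j,b\rangle$. This is where the geometry genuinely enters and where I anticipate the heaviest bookkeeping, so I regard it as the main obstacle. Once it is settled, the reduced presentation is exactly $A(D_4)\times\Z^3\big/\big\langle c_{12}c_{23}c_{31}=(a_1a_2a_3b)^3\big\rangle$, and the fact that neither factor collapses will follow from the explicit splitting constructed in the last step.

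Finally I would simplify this central quotient. Write $z:=(a_1a_2a_3b)^3=\Delta_{123}$, which by~\cite{BS} generates $Z(A(D_4))\cong\Z$, and $w:=c_{12}c_{23}c_{31}$. Since $w$ is a primitive element of $\Z^3=\langle c_{12},c_{23},c_{31}\rangle$, it extends to a basis, e.g. $\{w,c_{23},c_{31}\}$, so that $\Z^3\cong\langle w\rangle\times\Z^2$ with $\Z^2=\langle c_{23},c_{31}\rangle$. The defining relation $w=z$ identifies the free central factor $\langle w\rangle$ with $\langle z\rangle$. Inside $A(D_4)\times\langle w\rangle$ the element $wz^{-1}$ is central of infinite order and $\langle wz^{-1}\rangle\cap A(D_4)=1$, so the natural map $A(D_4)\to\bigl(A(D_4)\times\langle w\rangle\bigr)\big/\langle w=z\rangle$ is surjective (as $w\equiv z$ lands in $A(D_4)$) and injective (as its kernel is that trivial intersection), hence an isomorphism. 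The complementary $\Z^2$ is untouched by the relation, so $\PMod_{1,3,0}\cong A(D_4)\times\Z^2$, as claimed.
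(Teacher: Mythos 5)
Your proposal is correct and follows essentially the same route as the paper: rewrite the Gervais presentation using Remark~\ref{rem5}, eliminate $c_{21}$, $c_{13}$, $c_{32}$ via the degenerate star relations (checking the affected relations using centrality of $\Delta_{ij}^2$ in $\langle a_i,a_j,b\rangle$), and then absorb the remaining star relation into the central $\Z^3$. The only substantive remark is that the step you flag as the crux is lighter than you anticipate: all nine commutations of $c_{12},c_{23},c_{31}$ with $a_1,a_2,a_3,b$ are already explicit Gervais commutator relations (each $a_i$ fails to commute only with one of the \emph{eliminated} generators), so the bookkeeping reduces to the substituted relations you already describe; your final basis-change argument in $\Z^3$ is a slightly more explicit version of the paper's elimination of $c_{31}$.
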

\begin{proof}
We write down the Gervais presentation for $\PMod_{1,3,0}$, taking into account Remark~\ref{rem5}.

{\it Generators:}  $b$, $a_1$, $a_2$, $a_3$, $c_{12}$, $c_{21}$, $c_{13}$, $c_{31}$, $c_{23}$, $c_{32}$.

{\it Relations:} 
\begin{itemize}
\item Commutators (we write $x \rightleftarrows y$ for $xy=yx$): 

$a_i \rightleftarrows a_j$ for all $i,j$;

$a_1 \rightleftarrows c_{ij}$ for all $c_{ij}$, except $c_{32}$;

$a_2 \rightleftarrows c_{ij}$ for all $c_{ij}$, except $c_{13}$;

$a_3 \rightleftarrows c_{ij}$ for all $c_{ij}$, except $c_{21}$;

$b \rightleftarrows c_{ij}$ for all $i,j$;

$c_{12},c_{23},c_{31} \rightleftarrows c_{ij}$ for all $i,j$.

\item Braid relations: among $a_1,a_2,a_3,b$ only, as prescribed by the Coxeter graph $D_4$, see Figure~\ref{fig:2}.
\item Stars (only for triples $(i,j,k)$ with $i\le j<k$, using~\eqref{eq4}, \eqref{eq5}):

(1,1,2):\quad $c_{12}c_{21}=\Delta_{12}^2$, 

(1,1,3):\quad $c_{13}c_{31}=\Delta_{13}^2$, 

(1,2,3):\quad $c_{12}c_{23}c_{31}=\Delta_{123}$, 

(2,2,3):\quad $c_{23}c_{32}=\Delta_{23}^2$. 
\end{itemize}

To simplify this presentation, we eliminate three of $c_{ij}$ using the degenerate star relations above:
\[
c_{21}=c_{12}^{-1}\Delta_{12}^2, \qquad c_{13}=\Delta_{13}^2c_{31}^{-1}, \qquad c_{32}=c_{23}^{-1}\Delta_{23}^2.
\]
Let's show that with these eliminations, all the commutator and braid relations involving the eliminated letters are the consequences of the relations in the group $A(D_4)\times \Z^3(c_{12},c_{23},c_{31})$: 

$a_1\rightleftarrows c_{21}$: true since $a_1 \rightleftarrows c_{12}$ and $a_1 \rightleftarrows \Delta_{12}^2$ (the last element being central in $\langle a_1,b,a_2\rangle$). 

A similar reasoning shows that $a_1\rightleftarrows c_{13}$, $a_2\rightleftarrows c_{21}$, $a_2\rightleftarrows c_{32}$, $a_3\rightleftarrows c_{13}$, $a_3\rightleftarrows c_{32}$, and also that $b\rightleftarrows c_{ij}$ and $c_{12},c_{23},c_{31}\rightleftarrows c_{13},c_{32},c_{21}$.

This shows that $\PMod_{1,3,0}$ is isomorphic to $A(D_4)\times \Z^3$ modulo a single star relation for the triple $(1,2,3)$.

To show that $\PMod_{1,3,0}$ is isomorphic to $A(D_4)\times\Z^2$, we eliminate one of the generators of $\Z^3$, say, $c_{31}$ from the star relation:
\[
c_{31}=c_{23}^{-1}c_{12}^{-1}\Delta_{123},
\]
and observe (using the fact that $\Delta_{123}$ is central in $A(D_4)$) that all the commutation relations involving $c_{31}$ in $A(D_4)\times\Z^3$ still hold in $A(D_4)\times\Z^2(c_{12},c_{23})$.
\end{proof}

\begin{cor}\label{cor9}
We have the following isomorphisms:
\begin{itemize}
\item $\PMod_{1,2,1}\cong A(D_4)\times\Z;$
\item $\PMod_{1,1,2}\cong A(D_4);$
\item $\PMod_{1,0,3}\cong A(D_4)/Z(A(D_4))$.
\end{itemize}
\end{cor}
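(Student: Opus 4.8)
The plan is to derive all three isomorphisms from Proposition~\ref{prop8} by capping the boundary components of $\S_{1,3,0}$ one at a time via Lemma~\ref{lem3}. Capping one boundary component of $\S_{1,3,0}$ with a once-punctured disk produces $\S_{1,2,1}$; capping a second produces $\S_{1,1,2}$; and capping the third produces $\S_{1,0,3}$. Each of these surfaces has negative Euler characteristic, so Lemma~\ref{lem3} applies at every stage and presents the smaller mapping class group as the quotient of the larger one by the central $\Z$ generated by the twist around the capped boundary curve.

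The key geometric input, and the step I expect to be the main obstacle, is the correct identification of the three boundary twists of $\S_{1,3,0}$ among Gervais' generators. I would argue from Figure~\ref{fig:2} that the curves $c_{12}$, $c_{23}$, $c_{31}$ are precisely the three boundary-parallel curves, one for each boundary component. Granting this, the star relation for the triple $(1,2,3)$ is exactly the classical \emph{star relation}, expressing the product $c_{12}c_{23}c_{31}$ of the three boundary twists as the element $\Delta_{123}=(a_1a_2a_3b)^3$. Everything after this identification is a routine sequence of quotients by central cyclic subgroups.

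With the boundary twists pinned down, the first two cappings are immediate. By Lemma~\ref{lem3}, $\PMod_{1,2,1}\cong\PMod_{1,3,0}/\langle c_{12}\rangle$; since $c_{12}$ is a free generator of the factor $\Z^2=\langle c_{12}\rangle\times\langle c_{23}\rangle$ in the decomposition $\PMod_{1,3,0}\cong A(D_4)\times\Z^2$ of Proposition~\ref{prop8} and is central, this quotient is $A(D_4)\times\langle c_{23}\rangle\cong A(D_4)\times\Z$. Capping the second boundary then gives $\PMod_{1,1,2}\cong\PMod_{1,2,1}/\langle c_{23}\rangle\cong A(D_4)$.

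For the final isomorphism, the last capping gives $\PMod_{1,0,3}\cong\PMod_{1,1,2}/\langle c_{31}\rangle$. Here the elegant feature emerges: in $\PMod_{1,1,2}\cong A(D_4)$ the twists $c_{12}$ and $c_{23}$ have already become trivial, so the star relation $c_{12}c_{23}c_{31}=\Delta_{123}$ forces $c_{31}=\Delta_{123}$. Because $\Delta_{123}=(a_1a_2a_3b)^3$ generates $Z(A(D_4))$, the remaining quotient is by $\langle c_{31}\rangle=\langle\Delta_{123}\rangle=Z(A(D_4))$, yielding $\PMod_{1,0,3}\cong A(D_4)/Z(A(D_4))$. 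Thus the single star relation automatically converts the third capping into the central quotient, and once the boundary twists are correctly recognized the algebra of all three steps is forced.
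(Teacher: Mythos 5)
Your proposal is correct and follows essentially the same route as the paper: iterated application of Lemma~\ref{lem3} to the presentation of Proposition~\ref{prop8}, identifying $c_{12}$, $c_{23}$, $c_{31}$ as the three boundary twists and using the star relation to see that the final quotient is exactly by $\langle(a_1a_2a_3b)^3\rangle=Z(A(D_4))$. The only cosmetic difference is the order of capping (you cap $c_{12},c_{23},c_{31}$; the paper caps $c_{31},c_{23},c_{12}$), which changes nothing.
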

\begin{proof}
By applying Lemma~\ref{lem3} three times to the presentation
\[
\PMod_{1,3,0}\cong A(D_4)\times\Z^3(c_{12},c_{23},c_{31})\big/\big\langle c_{12}c_{23}c_{31}=(a_1a_2a_3b)^3\big\rangle,
\]
i.e. capping the boundary twists $c_{31}$, $c_{23}$ and $c_{12}$ with a punctured disk (in that order), and arguing as in the end of the proof of Proposition~\ref{prop8}, we get:
\begin{align*}
\PMod_{1,2,1}\cong {}& A(D_4)\times\Z^2(c_{12},c_{23})\big/\big\langle c_{12}c_{23}=(a_1a_2a_3b)^3\big\rangle\cong A(D_4)\times\Z;\\
\PMod_{1,1,2}\cong {}& A(D_4)\times\Z(c_{12})\big/\big\langle c_{12}=(a_1a_2a_3b)^3\big\rangle\cong A(D_4);\\
\PMod_{1,0,3}\cong {}& A(D_4)\big/\big\langle (a_1a_2a_3b)^3\big\rangle\cong A(D_4)/Z(A(D_4)),
\end{align*}
the last isomorphism taking place since $(a_1a_2a_3b)^3$ generates the center of $A(D_4)$, see~\cite[{Satz~7.2 and Zusatz}]{BS}.
\end{proof}

Now we will deal with the group $\PMod_{1,2,0}$ and its relatives in a similar, but simpler, way. Let generators $a_1$, $a_2$, $b$, $c_{12}$ and $c_{21}$ be as in the Figure~\ref{fig:3}. 

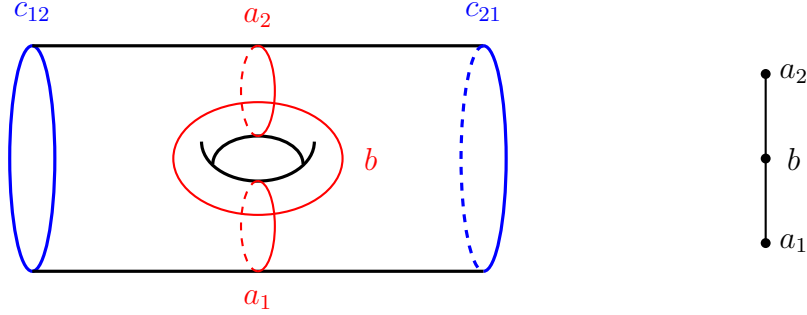
\begin{figure}[ht!]
\begin{center}
\begin{tikzpicture}[thick,scale=1.5]
\begin{scope}[very thick]
\draw [blue] (0,0) ellipse (0.2cm and 1cm);
\draw (0,1)--(4,1);
\draw (0,-1)--(4,-1);
\draw[blue,dashed] (4,1) arc (90:270:0.2cm and 1cm);
\draw[blue] (4,1) arc (90:-90:0.2cm and 1cm);
\draw[blue] (0,1.3) node {\Large$c_{12}$};
\draw[blue] (4,1.3) node {\Large$c_{21}$};

\draw (2.4,-0.05) arc (0:180:0.4cm and 0.25cm);
\draw (1.5,0.15) arc (180:360:0.5cm and 0.35cm);
\end{scope}

\color{red}
\draw (2,0) ellipse (0.75cm and 0.5cm);
\draw[dashed] (2,1) arc (90:270:0.15cm and 0.4cm);
\draw (2,1) arc (90:-90:0.15cm and 0.4cm);
\draw (2,-1) arc (-90:90:0.15cm and 0.4cm);
\draw[dashed] (2,-0.2) arc (90:270:0.15cm and 0.4cm);
\draw (2,-1.25) node {\Large$a_1$};
\draw (2,1.25) node {\Large$a_2$};
\draw (3,0) node {\Large$b$};

\begin{scope}[black,xshift=6.5cm]
\fill (0,0) circle (1.25pt);
\fill (0,0.75) circle (1.25pt);
\fill (0,-0.75) circle (1.25pt);
\draw (0,0.75)--(0,0)--(0,-0.75);
\draw (0.25,0.75) node {\Large$a_2$};
\draw (0.25,0) node {\Large$b$};
\draw (0.25,-0.75) node {\Large$a_1$};
\end{scope}

\end{tikzpicture}
\caption{Gervais' generators of $\PMod_{1,2,0}$ and the Coxeter graph $A_3$.\label{fig:3}}
\end{center}
\end{figure}

We have the following Proposition.

\begin{prop}\label{prop10}
In the group\/ $\PMod_{1,2,0}$, the elements $a_1$, $a_2$, and $b$ generate a subgroup isomorphic to $B_4$, and the elements $c_{12}$, $c_{21}$ generate a free abelian subgroup\/ $\Z^2$. The whole group $\PMod_{1,2,0}$ is isomorphic to their direct product modulo one (degenerate) star relation:
\[
\PMod_{1,2,0}\cong B_4\times\Z^2\big/\big\langle c_{12}c_{21}=(a_1ba_2)^4\big\rangle.
\]
In particular, $\PMod_{1,2,0}\cong B_4\times\Z$.
\end{prop}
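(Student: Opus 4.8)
\emph{Proof proposal.} The plan is to specialize the Gervais presentation to $\PMod_{1,2,0}$ and then run exactly the argument of Proposition~\ref{prop8}; the task is in fact simpler here, since the curves $c_{12}$, $c_{21}$ of Figure~\ref{fig:3} are boundary-parallel, so their twists are central, and since no non-degenerate star relation can occur. First I would write down the presentation: generators $a_1$, $a_2$, $b$, $c_{12}$, $c_{21}$; the commutation relations $a_1\rightleftarrows a_2$ together with $c_{12},c_{21}\rightleftarrows$ every generator (and each other); the braid relations $a_1 b a_1 = b a_1 b$ and $a_2 b a_2 = b a_2 b$ dictated by the Coxeter graph $A_3$ shown in Figure~\ref{fig:3}; and the star relations surviving after Remark~\ref{rem5}.

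For the star relations, I would observe that with indices confined to $\{1,2\}$ the only good triple with $i\le j<k$ is $(1,1,2)$, and that no triple has three distinct indices, so no non-degenerate star relation appears at all. The single relation reads $(a_1 a_1 a_2 b)^3 = c_{11}c_{12}c_{21}=c_{12}c_{21}$ using $c_{11}=1$. Applying \eqref{eq4} gives $(a_1 a_1 a_2 b)^3=(a_1 a_2 b)^4$, and then \eqref{eq5} rewrites this as $(a_1 b a_2)^4$, so that the relation takes the form $c_{12}c_{21}=(a_1 b a_2)^4=\Delta_{12}^2$, the generator of the center of the $A_3$-subgroup.

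Since the braid relations among $a_1,a_2,b$ are exactly those of the Artin group of type $A_3$, the subgroup $\langle a_1,a_2,b\rangle$ is presented as $A(A_3)\cong B_4$; because $c_{12}$ and $c_{21}$ commute with all generators and with each other, the remaining relations are precisely those making $\langle c_{12},c_{21}\rangle\cong\Z^2$ a direct factor, so the whole presentation collapses to $B_4\times\Z^2\big/\big\langle c_{12}c_{21}=(a_1 b a_2)^4\big\rangle$. To obtain $\PMod_{1,2,0}\cong B_4\times\Z$ I would then eliminate $c_{21}=c_{12}^{-1}(a_1 b a_2)^4$ via this star relation; since $(a_1 b a_2)^4$ is central in $B_4$, every commutation relation involving $c_{21}$ remains valid over $B_4\times\Z(c_{12})$, and the quotient is the genuine direct product $B_4\times\Z$. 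The only point requiring real care is the bookkeeping of the star relation, namely confirming that $(1,1,2)$ is the unique surviving triple and correctly chaining \eqref{eq4} and \eqref{eq5}; the remaining consistency checks are routine once the centrality of $(a_1 b a_2)^4$ is invoked, and there is no analogue of the nontrivial simultaneous elimination of several $c_{ij}$ needed in the $D_4$ case.
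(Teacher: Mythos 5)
Your proposal is correct and follows essentially the same route as the paper: write out the Gervais presentation for $\PMod_{1,2,0}$, observe that the only surviving star relation is the degenerate one for $(1,1,2)$, which via \eqref{eq4} and \eqref{eq5} reads $c_{12}c_{21}=(a_1ba_2)^4=\Delta_{12}^2$, recognize the presentation as $B_4\times\Z^2$ modulo that single relation, and then eliminate $c_{21}$ using the centrality of $\Delta_{12}^2$ in $B_4$. The paper's proof is exactly this argument, presented as a simplified version of the proof of Proposition~\ref{prop8}.
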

\begin{proof}
The proof is analogous to the proof of Proposition~\ref{prop8}. The Gervais presentation for $\PMod_{1,2,0}$ is the following:

{\it Generators:}  $b$, $a_1$, $a_2$, $c_{12}$, $c_{21}$.

{\it Relations:} 
\begin{itemize}
\item Commutators: 

$a_1 \rightleftarrows a_2$; \qquad 
$a_1, a_2, b \rightleftarrows c_{12}, c_{21}$; \qquad
$c_{12} \rightleftarrows c_{21}$.

\item Braid relations: among $a_1,a_2,b$ only, as prescribed by the Coxeter graph $A_3$, see Figure~\ref{fig:3}.
\item Stars (only for triples $(i,j,k)$ with $i\le j<k$, using~\eqref{eq4}, \eqref{eq5}):

(1,1,2):\quad $c_{12}c_{21}=\Delta_{12}^2$, 
\end{itemize}
We see that the Gervais presentation gives us the presentation 
\[
\PMod_{1,2,0}\cong B_4\times\Z^2\big/\big\langle c_{12}c_{21}=(a_1ba_2)^4\big\rangle
\]
on the nose. To show that $\PMod_{1,2,0}\cong B_4\times\Z$, we eliminate $c_{21}$ from the star relation:
\[
c_{21}=c_{12}^{-1}\Delta_{12}^2,
\]
and check that all the commutation relations from $B_4\times\Z^2(c_{12},c_{21})$ involving $c_{21}$ hold true in $B_4\times\Z$. Again, as above, we are using the fact that $\Delta_{12}^2$ is central in $B_4$. 
\end{proof}

\begin{cor}\label{cor11}
We have the following isomorphisms:
\begin{itemize}
\item $\PMod_{1,1,1}\cong B_4;$
\item $\PMod_{1,0,2}\cong B_4/Z(B_4)$.
\end{itemize}
\end{cor}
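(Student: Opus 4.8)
The plan is to mirror exactly the structure used in Corollary~\ref{cor9}, applying Lemma~\ref{lem3} (capping the boundary) to the clean presentation of $\PMod_{1,2,0}$ established in Proposition~\ref{prop10}. We start from
\[
\PMod_{1,2,0}\cong B_4\times\Z^2(c_{12},c_{21})\big/\big\langle c_{12}c_{21}=(a_1ba_2)^4\big\rangle,
\]
and observe that capping the boundary component corresponding to $\delta_{m+1}$ with a once-punctured disk kills the boundary twist around it. Here the two boundary twists are $c_{12}$ and $c_{21}$, so capping one of them, say $c_{21}$, sends $c_{21}\mapsto 1$ in the quotient. By Lemma~\ref{lem3} the kernel of $\Cap$ is exactly the central $\Z$ generated by that twist, so $\PMod_{1,1,1}$ is obtained by setting $c_{21}=1$.

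First I would cap $c_{21}$. Setting $c_{21}=1$ in the presentation forces $c_{12}=(a_1ba_2)^4=\Delta_{12}^2$ through the star relation, and since $\Delta_{12}^2$ is already an element of $B_4$ (it is the central element $(a_1ba_2)^4$), the generator $c_{12}$ is no longer independent. Thus the $\Z^2$ factor collapses entirely, and all the commutation relations involving $c_{12}$ become consequences of the centrality of $\Delta_{12}^2$ in $B_4$, exactly as in the final elimination step of the proof of Proposition~\ref{prop10}. This yields
\[
\PMod_{1,1,1}\cong B_4\times\Z^2\big/\big\langle c_{12}c_{21}=(a_1ba_2)^4,\ c_{21}=1\big\rangle\cong B_4.
\]

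Next I would cap the second (and final) boundary twist $c_{12}$, which corresponds to passing from $\PMod_{1,1,1}$ to $\PMod_{1,0,2}$. Applying Lemma~\ref{lem3} again sets $c_{12}=1$, so the star relation becomes $(a_1ba_2)^4=1$, i.e. we quotient $B_4$ by the subgroup generated by $(a_1ba_2)^4$. The final step is to identify this quotient as $B_4/Z(B_4)$: the element $(a_1ba_2)^4$ is the square of the fundamental element of $B_4$ (the Artin group of type $A_3$), and by~\cite[Satz~7.2 and Zusatz]{BS} it generates the center of $B_4$. Hence $\PMod_{1,0,2}\cong B_4/Z(B_4)$.

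The only real subtlety—the analogue of the main obstacle in Corollary~\ref{cor9}—is verifying that after each capping the eliminated commutation relations are genuinely redundant, which rests on the centrality of $\Delta_{12}^2=(a_1ba_2)^4$ in $B_4$; but this is precisely the fact already used at the end of Proposition~\ref{prop10}, so no new argument is needed. I would therefore present the corollary's proof compactly, writing out the two capping steps as a short display analogous to the one in Corollary~\ref{cor9} and citing~\cite{BS} for the identification of the center.
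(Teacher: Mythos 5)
Your proposal is correct and follows essentially the same route as the paper: apply Lemma~\ref{lem3} twice to the presentation from Proposition~\ref{prop10}, capping $c_{21}$ and then $c_{12}$, and identify $(a_1ba_2)^4$ as the generator of $Z(B_4)$ via~\cite{BS}. The only cosmetic difference is that you phrase the first capping as ``set $c_{21}=1$ and collapse $\Z^2$'' where the paper records the intermediate presentation $B_4\times\Z(c_{12})\big/\big\langle c_{12}=(a_1ba_2)^4\big\rangle$; the content is identical.
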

\begin{proof}
Again, as above, we apply Lemma~\ref{lem3} two times to the presentation
\[
\PMod_{1,2,0}\cong B_4\times\Z^2(c_{12},c_{21})\big/\big\langle c_{12}c_{21}=(a_1ba_2)^4\big\rangle.
\]
which amounts to capping the boundary twists $c_{21}$, $c_{12}$ with punctured disks. We get:
\begin{align*}
\PMod_{1,1,1}\cong {}& B_4\times\Z(c_{12})\big/\big\langle c_{12}=(a_1ba_2)^4\big\rangle\cong B_4;\\
\PMod_{1,0,2}\cong {}& B_4\big/\big\langle (a_1ba_2)^4\big\rangle\cong B_4/Z(B_4),
\end{align*}
the last isomorphism taking place since $(a_1ba_2)^4$ generates the center of $B_4$.
\end{proof}

\begin{rem}
One can obtain the isomorphisms from Theorem~\ref{th5} ``from first principles'', starting with the presentation for $\PMod_{1,1,0}\cong B_3$ and reverting the process of capping a boundary component with a punctured disk followed by the forgetful map. The basic tools here are Lemma~\ref{lem3}, the Birman exact sequence~\cite[4.2]{FM} (which describes the kernel of the forgetful map), and the method of constructing a finite presentation of an extension of two finitely presented groups~\cite[Prop.~10.2.1]{John}. The author undertook such an approach in an early version of this paper, which resulted in a much longer text full of computations. An interested reader is welcome to request it from the author.
\end{rem}

\section{On a question of Hamidi-Tehrani}\label{sec:HT}
In~\cite{HT}, Hamidi-Tehrani studied subgroups of mapping class groups generated by positive multi-twists. In the last section he asks the following question:
\begin{que1}
If $a_1$, $a_2$, $b$ are the Dehn twists as in the Figure~\ref{fig:3}, is it true that the group $\langle a_1^2a_2, b\rangle$ is free of rank $2$?
\end{que1}

We answer this question in the negative. Indeed, we saw in~\eqref{eq4} that the element $\Delta_{112}=(a_1^2a_2\cdot b)^3=\Delta_{12}^2$ is central in $B_4$, and hence is central and nontrivial in the group in question. Hence, this group cannot be free non-abelian.

\end{document}